\numberwithin{equation}{section}
\newtheorem{thm}{Theorem}[section]
\newtheorem{defn}[thm]{Definition}
\newtheorem{exam}[thm]{Example}
\newtheorem{rem}[thm]{Remark}
\newcommand{\ini}{\mbox{in}\,}
\renewcommand{\dim}{\mbox{dim}\,}
\newcommand{\reg}{\mbox{reg}}
\begin{document}

\bibliographystyle{amsplain}

\title[Ideals with large Regularity and Regularity Jumps]{Some Remarks on Ideals with large Regularity and Regularity Jumps}

\author{Keivan Borna}
\address{ Keivan Borna \\Faculty of Mathematics and Computer Science,
Kharazmi University, Tehran, Iran}
\email{borna@khu.ac.ir}

\author{Abolfazl Mohajer}
\address{Abolfazl Mohajer\\Johannes Gutenberg-Universit\"{a}t Mainz,  Staudingerweg 9, D 55128 Mainz, Germany}
\email{mohajer@uni-mainz.de}

\keywords{associated primes, Castelnuovou-Mumford regularity, powers of ideals, primary ideals, local
cohomology.}

\subjclass[2010]{13P20; 13D02; 68W30; 13D45}



\date{Aug.-17-2015}


\begin{abstract}
This paper exhibits some new examples of the behavior of the
Castelnuovo-Mumford regularity of homogeneous ideals in polynomial
rings. More precisely, we present new examples of homogenous ideals with large regularity compared to the generating degree. Then we consider the regularity jumps of ideals. In particular we provide an infinite family of ideals having regularity jumps at a certain power.
\end{abstract}

\maketitle

\section{Introduction}

Castelnuovo-Mumford regularity, or simply regularity, together
with the projective dimension are the most important invariants of
a homogenous ideal in a polynomial ring $k[x_1,...,x_n]$ (or a
closed subscheme of $\mathbb{P}^{n}$). It measures the extent of
cohomological complexity of such an ideal. Explicitly, the
regularity is a measure for the Hilbert function of the ideal, or
the ideal sheaf, to become polynomial; see \cite{E}, \S 4. Bayer and
Mumford \cite{BM} point out that the regularity can also be considered
as a measure of the complexity of computing the Gr\"obner bases.
More generally, let $S=k[x_1,...,x_n]$ with $k$ a field of
characteristic zero and $M$ be a finitely generated graded
$S$-module. Consider a minimal
graded free resolution of $M$ as follows.
\[
\mathbb{F}: ...\to F_i\xrightarrow{\delta_i} F_{i-1}\xrightarrow{\delta_{i-1}}...\to F_0\xrightarrow{\delta_0} M
\]
There exists integers $a_{ij}$ such that $F_i=\sum S(-a_{ij})$.
The regularity of $M$, denoted $reg(M)$, is then defined to be the
supremum of the numbers $a_{ij}-i$. For $d\geq reg(M)+1$, the Hilbert function $H_M(d)$
agrees with the Hilbert polynomial $P_M(d)$.

Another way of defining the regularity is through graded local
cohomology modules $H^{i}_{m}(M)$ for each $0\leq i\leq \dim(M)$, where $m=(x_1,...,x_n)$ denotes
the irrelevant maximal ideal of $S$. As such modules are Aritinan,
one can define $end(H^{i}_{m}(M))$ as the maximum integer $k$ such
that $H^{i}_{m}(M)_k\neq 0$. Then one can equivalently define
\[
reg(M)=max \{end(H^{i}_{m}(M))+i\}
\]

For equivalent definitions and various algebro-geometric
properties of the regularity we refer to \cite{E, BS, L}.

In the case that $M=I$ is a homogenous ideal in $S$, we remark that:

\begin{rem} Let $I$ be a homogenous ideal in the
polynomial ring $S=k[x_1,...,x_n]$ and $m$ be the irrelevant
maximal ideal of $S$. If $I$ is not $m$-primary, that is, if
$\sqrt{I}\neq m$, then $reg(I)=min\{\mu|H^{i}(S/I)_{\mu-i}=0\; 
\forall i\}$; see \cite{Ch}, Proposition 9.5.
\end{rem}

If $I$ is a homogenous ideal generated in a single degree $d$,
then $reg(I)\geq d$. One important problem in studying the
Castelnuovo-Mumford regularity of ideals is to find ideals whose
regularity if large relative to the generating degree. Mayr and
Meyer \cite{MM} have given examples of ideals in polynomial rings in
$10n+2$ variables whose regularity is a doubly exponential
function of $n$ and polynomial in the generating degree $d$; see
\cite{MM}. Caviglia \cite{C} was probably the first to produce an ideal in a
polynomial ring with fixed number of variables and three
generators whose regularity is much larger than the generating
degree. There have been other attempts to find examples of ideals
with large regularity; see for example
\cite{BMNSSS}.

Another interesting problem is to consider the regularity of
powers of an ideal $I$. In \cite{Co} an interesting notion, namely that
of \emph{regularity jumps} has been defined. An ideal has
regularity jump at the $k$-th power if $reg(I^k)-reg(I^{k-1})>d$.
In the same article the author mentions many new and known
examples of ideals with this property. In \cite{B} the author presented a simple
criterion in terms of Rees algebra of a specific ideal to show that
high enough powers of certain ideals have linear resolution.

Our aims in this paper are two folds. First we present new results of homogenous
ideals with large regularity comparing to their generating degree. Then we focus on
the regularity jumps of ideals and provide an
infinite family of ideals having regularity jumps at a certain power.

This paper is structured as follows. In the first section we
discuss several variants of Caviglia's example and give further
examples of ideals with stronger regularities. In particular, we
explain (see Remark 2.7) why we expect that a generalization of
our example would produce polynomially large regularities of
arbitrary degree. In the second section we consider the problem of
ideals with regularity jumps and show that an infinite family of
ideal $I_n$ for $n\geq3$ have regularity jump at $k=2$. The ideals
$I_n$ define Cohen-Macaulay rings of minimal multiplicity
indicating that even among such ideals one can find examples whose
squares do not have linear resolution. The ideal $I_3$ has been
shown in \cite{Co} to have such a regularity jump by declaring the
existence of a non-linear second syzygy. Our contribution here is
to show that for all $n\geq 3$ the ideal $I_n^2$ has regularity
strictly greater than $4$. We achieve this by local cohomological
methods.

\section{Ideals with large regularity}

In this section we are going to construct homogenous ideals with
large Castelnuovo-Mumford regularity. Of course, by large
regularity we mean that $\frac{reg(I)}{d(I)}$ could be made
arbitrarily large, where $d=d(I)$ is the degree of generators of
$I$. Note that we only consider \emph{equigenerated} ideals, i.e.,
homogenous ideals all of whose generators are of the same degree.
As it is mentioned in \cite{E}, there are only few known examples in
small number of variables of such ideals and as the author in \cite{E}
mentions it is interesting to construct more such ideals
especially with fix number of variables. \\

Our main tool to produce ideals with large regularity is the
notion of weakly stability which was developed in \cite{C}. In fact, we
also compute the initial ideal of the ideals we consider. Although
this imposes lengthy computations even in relatively simple cases,
this approach has the advantage of demonstrating the Bayer-Mumford
philosophy in \cite{BM} which we mentioned in the introduction that
\emph{regularity is a measure of complexity of computing the
Gr\"obner basis} and hence ideals with larger regularity give rise
to more complicated initial ideals. Let us also remark that whereas
the following ideals with large regularity are not prime ideals, it
is expected that for prime ideals much smaller upper bounds should
exist. See \cite{EG}. We first recall the definition and some properties
that we will need later.\\


\begin{defn}
A monomial ideal $(u_1,...,u_n)$ is
called \emph{weakly stable} if for each generator $u_i$, there
exist $a_j\in \mathbb{N}$ such that
$x_{j}^{a_j}(\frac{u_i}{x^{\infty}_{m(u_i)}})\in I$ for every
$j<m(u_i)$. Where $m(u_i)$ is the maximum of all $j$ such that
$x_j$ divides $u_i$ and $x^{\infty}_{m(u_i)}$ is the highest
power of $x_{m(u_i)}$ dividing the monomial $u_i$.
\end{defn}

Perhaps the best known and simplest example of ideals with large
regularity in a fixed polynomial ring was given by Caviglia in
\cite{C}. This is the ideal $I=(x_1^d,x_2^d,x_1x_3^{d-1}-x_2x_4^{d-1})$ for which
$reg(I)=d^2-1$. Here we first investigate variants of this ideal
to obtain more examples of ideals with large regularity. Let us
recall two results from \cite{C} that relate the regularity of an ideal
to that of its initial ideal:

\begin{thm} Let $I\subseteq k[x_1,...,x_n]$ be a weakly
stable ideal generated by the minimal system $u_1,...,u_r$. Assume that $u_1>u_2>...>u_r$ with respect to the reverse lexicogeraphical order (which is different from revlex). Then $reg(I)=max \{deg(u_i)+C(u_i)\}$, where $C(u_i)$ is the highest degree of a monomial $\nu$ in
$k[x_1,...,x_j]$ such that $\nu \notin
((u_1,...,u_{i-1}):u_i)$\\
\end{thm}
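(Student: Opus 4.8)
The plan is to use $\reg(I)=\reg(S/I)+1$ and to filter $S/I$ by the partial sums of its generators, reducing everything to the colon ideals $L_i:=(u_1,\dots,u_{i-1}):u_i$. Put $d_i:=\deg u_i$ and $I_i:=(u_1,\dots,u_i)$, so that $0=I_0\su I_1\su\cdots\su I_r=I$. For each $i$ the module $I_i/I_{i-1}$ is cyclic on the image of $u_i$ with annihilator $L_i$, so there are short exact sequences
\[
0\lo (S/L_i)(-d_i)\lo S/I_{i-1}\lo S/I_i\lo 0 .
\]
Feeding these into the inequality $\reg(C)\le\max\{\reg(B),\reg(A)-1\}$ for $0\to A\to B\to C\to 0$, and inducting on $i$, gives at once $\reg(S/I)\le\max_i\{d_i+\reg(S/L_i)-1\}$, i.e. $\reg(I)\le\max_i\{d_i+\reg(S/L_i)\}$. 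A point needed both here and later is that each $I_{i-1}$ is again weakly stable: because of the way the order on the generators is defined, the minimal generator of $I$ dividing $x_\ell^{a_\ell}\bigl(u_k/x^{\infty}_{m(u_k)}\bigr)$ --- a monomial in the variables strictly below $x_{m(u_k)}$ --- precedes $u_k$, hence already lies in $(u_1,\dots,u_{k-1})$.

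The next step is to identify $\reg(S/L_i)$ with $C(u_i)$. Write $m:=m(u_i)$. Using weak stability together with the order one checks that (a) every minimal generator of $L_i$ lies in $k[x_1,\dots,x_{m-1}]$ --- indeed any $u_k$ with $k<i$ involves no variable beyond $x_m$ and no higher power of $x_m$ than $u_i$ does, so $u_k/\gcd(u_k,u_i)\in k[x_1,\dots,x_{m-1}]$; and (b) $L_i$ contains $x_\ell^{a_\ell}$ for every $\ell<m$, obtained by pulling the weak-stability relations $x_\ell^{a_\ell}(u_i/x^{\infty}_m)\in I$ back into $(u_1,\dots,u_{i-1})$ as in the previous paragraph and multiplying by the power of $x_m$ stripped from $u_i$. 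Thus $L_i$ is extended from an $(x_1,\dots,x_{m-1})$-primary monomial ideal $\overline{L_i}$, so $S/L_i$ is a polynomial extension of the Artinian ring $A_i:=k[x_1,\dots,x_{m-1}]/\overline{L_i}$. Since regularity is unchanged under polynomial extensions and, for a module of finite length, equals its top nonzero degree, $\reg(S/L_i)=\reg(A_i)=C(u_i)$; combined with the above this yields $\reg(I)\le\max_i\{d_i+C(u_i)\}$.

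What remains --- the reverse inequality --- is the part I expect to be the real obstacle. Pick $i^{*}$ attaining the maximum and a monomial $\nu$ of degree $C(u_{i^{*}})$ in $k[x_1,\dots,x_{m-1}]$, $m=m(u_{i^{*}})$, not in $L_{i^{*}}$; by (b) this $\nu$ is a top-degree socle element of $A_{i^{*}}$, so it produces a nonzero class in $\mathrm{Tor}^{S}_{m-1}\bigl((S/L_{i^{*}})(-d_{i^{*}}),k\bigr)$ in internal degree $d_{i^{*}}+C(u_{i^{*}})+m-1$, equivalently an extremal class in the top local cohomology of $(S/L_{i^{*}})(-d_{i^{*}})$. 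One must show that (a shift of) this class survives into the homology of $S/I$ --- that in the iterated mapping cone built from the filtration above nothing cancels the extremal contribution attached to $u_{i^{*}}$. This non-cancellation is precisely where the reverse-lexicographic ordering of the generators, as opposed to an arbitrary one, is indispensable: one has to rule out differentials coming from the other generators hitting this class, by an incompatibility of homological and internal degree dictated by the order. Making that rigorous --- especially in the boundary case where several of the numbers $d_i+C(u_i)$ coincide, so that classes attached to different generators occupy the same bidegree --- is the delicate part; everything else reduces to bookkeeping with the short exact sequences above.
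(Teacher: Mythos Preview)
The paper does not actually prove this theorem: it is quoted (together with Theorem~2.3) from Caviglia's thesis \cite{C} as a tool, so there is no in-paper argument to compare your proposal against.

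That said, your proposal has a genuine gap, and it is the one you yourself flag. The upper bound via the filtration $I_0\subseteq\cdots\subseteq I_r=I$ and the identification $\reg(S/L_i)=C(u_i)$ is the natural route and is essentially correct in outline, though your claim (a) --- that every $u_k$ with $k<i$ involves no variable beyond $x_m$ and no higher power of $x_m$ than $u_i$ does --- depends on precisely which ``reverse lexicographical'' order is intended; the paper's parenthetical ``which is different from revlex'' signals a nonstandard convention, and you would need Caviglia's actual definition before the combinatorics can be checked. The real problem is the reverse inequality: you outline a plan (produce an extremal Tor or local-cohomology class attached to the maximizing generator $u_{i^*}$ and argue that it survives the iterated mapping cone), and then concede that it is not carried out, particularly when several of the numbers $d_i+C(u_i)$ coincide. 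Without this half the statement is not established --- you have only $\reg(I)\le\max_i\{d_i+C(u_i)\}$ --- and since every application in the paper reads off $\reg(I)$ \emph{as} that maximum, the missing direction is not optional.
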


\begin{thm} Let $I$ be a homogenous ideal such that the
initial ideal $in(I)$ with respect to the rev-lex order is
weakly stable, then $reg(I)=reg(in(I))$.
\end{thm}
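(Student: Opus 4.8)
The plan is to prove the inequalities $\reg(I)\le\reg(\ini(I))$ and $\reg(\ini(I))\le\reg(I)$ separately. The first holds for every term order and is formal; the second uses both the weak stability of $\ini(I)$ and the special behaviour of the rev-lex order under generic hyperplane sections.

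For $\reg(I)\le\reg(\ini(I))$ I would invoke the standard Gr\"obner degeneration argument: $S/\ini(I)$ is the special fibre of a flat one-parameter family whose general fibre is isomorphic to $S/I$, so $\beta_{i,j}(S/I)\le\beta_{i,j}(S/\ini(I))$ for all $i,j$ by semicontinuity of graded Betti numbers, and taking the supremum of $j-i$ over the nonzero Betti numbers gives the claim. I would record this as a lemma, since it will be reused in fewer variables in the induction below.

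For the reverse inequality I would induct on $n$, the point being that for the rev-lex order the last variable $x_n$ plays the role of a generic linear form. Write $J=\ini(I)$, let $\overline{I}$ be the image of $I$ in $S'=k[x_1,\dots,x_{n-1}]$, and set $\overline{J}=(J+x_nS)/x_nS\subseteq S'$. Two facts drive the argument: weak stability is inherited by $\overline{J}$ and by the saturation $J:x_n^{\infty}$ (so the inductive hypothesis applies to them), and the rev-lex order is compatible with restriction modulo $x_n$ and with saturation, i.e.\ $\ini(\overline{I})=\overline{J}$ and $\ini(I:x_n^{\infty})=J:x_n^{\infty}$. Using Theorem 2.2 one writes $\reg(J)$ in terms of $\reg(\overline{J})$ and the regularity of the relevant colon ideal; on the $I$-side the short exact sequence
\[
0\longrightarrow\bigl(S/(I:x_n)\bigr)(-1)\xrightarrow{\ x_n\ }S/I\longrightarrow S/(I+x_nS)\longrightarrow 0
\]
bounds $\reg(S/I)$ from below in terms of $\reg(S/\overline{I})$ and $\reg(S/(I:x_n))$. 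Chaining the inductive bound $\reg(\overline{J})\le\reg(\overline{I})$ with the colon estimate coming from the saturation identity then yields $\reg(J)\le\reg(I)$. The base case of a principal or zero-dimensional weakly stable ideal follows directly from Theorem 2.2.

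The step I expect to be the main obstacle is the passage from the saturation-level identity $\ini(I:x_n^{\infty})=J:x_n^{\infty}$ to usable control of a single colon such as $J:x_n$, which is genuinely larger than $\ini(I:x_n)$ in general, together with the bookkeeping of the degree shifts across the short exact sequence so that the maxima defining $\reg(J)$ and $\reg(I)$ actually coincide rather than merely satisfy one inequality. Arranging the inductive ``$+1$'' shifts to land on exactly the cohomological degrees carrying the extremal values, and verifying that it is precisely weak stability that prevents any loss at each step, is where the real work of the proof lies.
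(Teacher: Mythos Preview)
The paper does not give its own proof of this statement: Theorems~2.2 and~2.3 are introduced with the phrase ``Let us recall two results from~[C]'' and are quoted verbatim from Caviglia's thesis without argument. So there is no in-paper proof to compare your proposal against.

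Your outline is nonetheless the standard one, going back to Bayer--Stillman and carried out in~[C]. The upper bound via semicontinuity of Betti numbers is correct as stated. For the reverse inequality your ingredients---the rev-lex identities $\ini(I+(x_n))=\ini(I)+(x_n)$ and $\ini(I:x_n^{\infty})=\ini(I):x_n^{\infty}$, together with the fact that weak stability passes to $\overline{J}$ and to $J:x_n^{\infty}$---are exactly the right ones. The obstacle you single out is real, and in Caviglia's treatment it is handled not by chasing a single colon $J:x_n$ but by recasting weak stability as the statement that $x_n,x_{n-1},\dots,x_1$ is a filter-regular (almost-regular) sequence on $S/J$. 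The rev-lex saturation identity then forces the same sequence to be filter-regular on $S/I$, and the regularity of both quotients is computed by the identical formula
\[
\reg(S/M)=\max_i\bigl\{\,\e\bigl(H^0_{\fm}(S/(M+(x_n,\dots,x_{n-i+1})))\bigr)+i\,\bigr\},
\]
whose individual terms coincide for $M=I$ and $M=\ini(I)$ precisely because of those two rev-lex identities. This packaging absorbs the ``$+1$'' bookkeeping you were worried about and avoids ever comparing $J:x_n$ with $\ini(I:x_n)$ directly.
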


These two statements allow us to compute the regularity of an
ideal, if the initial ideal is weakly stable. Of course the easiest variants of Caviglia's example are ideals of the form
$I_{ij}=(x_{1}^d,x_{2}^d,x_{1}^ix_{3}^{d-i}-x_2^jx_4^{d-j})$. It
is straightforward to see that $in(I_{ij})$ is weakly stable and
that regularity of this ideal is large. For example
$reg(I_{1j})=d^2-1$ and if  $i=j=2$, we
have:
\[reg(I_{22})= \left\{
  \begin{array}{l l}
    \frac{(d+2)(d-1)}{2} & \quad \text{if $d$ is odd}\\
    \frac{d^2-2}{2} & \quad \text{if $d$ even}
  \end{array} \right.\]
  \

Note that $I_{22}$ has weaker regularity than $I=I_{11}$ which is
Caviglia's example. The proofs of the above claims can be seen by the same method
as in the following.\\

In \cite{Ch}, the regularity of powers of $I$ is computed using local
cohomological arguments. Here we compute the regularity of $I^2$
by computing its initial ideal. In particular, one sees that $I^2$
is also an ideal with large regularity.

\begin{thm} Let $I=(x_1^d,x_2^d,x_1x_3^{d-1}-x_2x_4^{d-1})$. Consider the ideal $J=I^2$ given by
\[
(x_{1}^{2d},x_{2}^{2d},x_{1}^{d}x_{2}^{d},x_{1}^{d}(x_{1}x_{3}^{d-1}-x_{2}x_{4}^{d-1}),x_{2}^{d}(x_{1}x_{3}^{d-1}-x_{2}x_{4}^{d-1}), (x_{1}x_{3}^{d-1}-x_{2}x_{4}^{d-1})^{2})
\]
in $k[x_1,...,x_4]$. Then it holds that: $reg(J)=d^2+d-1$ for all
$d\geq 2$.
\end{thm}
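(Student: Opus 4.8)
The plan is to compute $\ini(J)$ with respect to the reverse lexicographic order, verify that it is weakly stable, and then apply Theorems 2.2 and 2.3 to reduce the regularity computation of $J$ to the combinatorial formula $\max\{\deg(u_i)+C(u_i)\}$. The leading terms of the six listed generators of $J=I^2$ are (using $x_1>x_2>x_3>x_4$) the monomials $x_1^{2d}$, $x_2^{2d}$, $x_1^dx_2^d$, $x_1^{d+1}x_3^{d-1}$, $x_1x_2^dx_3^{d-1}$ and $x_1^2x_3^{2d-2}$. However these monomials alone will not generate $\ini(J)$: just as in Caviglia's original example the interesting behavior comes from $S$-polynomial reductions, so the first real task is to run Buchberger's algorithm and identify the full Gröbner basis. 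I expect new generators to appear from combining the binomial generators with the pure powers $x_1^d, x_2^d$ (and their squares), producing monomials involving high powers of $x_3$ and $x_4$ — by analogy with the $d^2-1$ computation for $I$ itself, one anticipates a generator whose leading term is roughly $x_2 x_4^{2d-2}\cdot(\text{power of }x_2)$ or $x_4^{\text{something}}$ contributing the degree $d^2+d-1$.

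Concretely I would proceed as follows. First, order the generators of $J$ by the reverse lexicographic order on their leading monomials as required by Theorem 2.2. Second, compute all relevant $S$-polynomials; the crucial ones are those between the "binomial" generators $f_1 = x_1^d g$, $f_2 = x_2^d g$, $f_3 = g^2$ (where $g = x_1x_3^{d-1}-x_2x_4^{d-1}$) and the pure powers, since these are the ones that generate non-obvious elements. I would reduce these and collect the stabilized generating set of $\ini(J)$, keeping careful track of which monomial in $k[x_1,\dots,x_j]$ achieves the maximal $C(u_i)$. Third, check weak stability of $\ini(J)$ directly against Definition 2.1: for each minimal monomial generator $u$, with $m=m(u)$, verify that $x_j^{a_j}\cdot(u/x_m^{\infty})$ lies in $\ini(J)$ for suitable $a_j$ and all $j<m$ — this is typically routine once the generators are written down but must be done for every generator. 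Fourth, apply Theorem 2.3 to conclude $\reg(J)=\reg(\ini(J))$, then apply Theorem 2.2 to compute $\reg(\ini(J))=\max_i\{\deg(u_i)+C(u_i)\}$ and check that the maximum equals $d^2+d-1$.

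The main obstacle will be the Gröbner basis computation itself: identifying \emph{all} the elements of the Gröbner basis of $J$ and proving that the list is complete (i.e. that all remaining $S$-polynomials reduce to zero). Because $J$ has six generators of degree $2d$ in four variables with two of them binomial and one of them the square of a binomial, the combinatorics of the reductions is genuinely more involved than for $I=I_{11}$, and getting the exponents right in the $C(u_i)$ computation — particularly isolating which generator supplies the critical monomial $\nu$ of degree $d^2+d-1-\deg(u_i)$ in the appropriate $k[x_1,\dots,x_j]$ — is where the delicate bookkeeping lies. Once $\ini(J)$ is pinned down, verifying weak stability and evaluating the formula is mechanical; the only conceptual input is the reduction to initial ideals, which is already supplied by Theorems 2.2 and 2.3. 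I would also sanity-check the final answer against the known value $\reg(I)=d^2-1$ and against the general expectation (e.g. from \cite{Ch}) for $\reg(I^2)$, and against small cases such as $d=2$, where $\reg(J)$ should equal $5$.
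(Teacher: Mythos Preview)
Your proposal is correct and follows precisely the paper's approach: the paper also runs Buchberger's algorithm to compute $\ini(J)$ explicitly, verifies that it is weakly stable, and then applies Theorems~2.2 and~2.3. The paper carries the computation through in full, finding that the critical generator is $x_1^{2}x_2^{\,d-1}x_3^{\,d-1}x_4^{(d-1)^2}$ (of degree $(d-1)^2+2d$) with $C(u_i)=d-2$ realized by $x_3^{\,d-2}$, so that $\reg(J)=(d-1)^2+2d+(d-2)=d^2+d-1$.
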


\begin{proof} We wield the Buchberger's algorithm to find a
Gr\"obner basis for the ideal $J$ which yields the following set
of generators for $in(J)$:
\begin{align*}
\begin{split}
&\{x_{1}^{2d},x_{2}^{2d},x_{1}^{d}x_{2}^{d},x_{1}^{d+1}x_{3}^{d-1},x_{1}x_{2}^{d}x_{3}^{d-1},
x_{1}^{2}x_{3}^{2(d-1)}\} \cup\\
&\{x_1^{d-i}x_2^{d+i}x_4^{i(d-1)}|i=1,...,d-1\} \cup\\
&\{x_1^{d-i}x_2^{i+1}x_3^{d-1}x_4^{(i+1)(d-1)}|i=0,...,d-2\} \cup\\
&\{x_1^{2d-i}x_2^{i}x_4^{i(d-1)}|i=1,...,d-1\}
\end{split}
\end{align*}

Note that unlike the case for $I$, it is not the case here that
all S-polynomials are reduced, nor is it true that the
S-polynomials are all monomials. For the sake of completeness we
present the computation of $in(J)$ in what follows. This also has
the advantage of showing our general method in the later results.
Set
\begin{align*}
\begin{split}
&g_1=(x_{1}x_{3}^{d-1}-x_{2}x_{4}^{d-1})^{2},\\
&g_2=x_{2}^{d}(x_{1}x_{3}^{d-1}-x_{2}x_{4}^{d-1}),\\
&g_3=x_{1}^{d}(x_{1}x_{3}^{d-1}-x_{2}x_{4}^{d-1}),\\
&g_4=x_{1}^{d}x_{2}^{d},\\
&g_5=x_{2}^{2d},\\
&g_6=x_{1}^{2d},\\
&G=\{g_1,g_2,g_3,g_4,g_5,g_6\}.
\end{split}
\end{align*}
	
Let us compute the S-polynomials to find a Gr\"obner basis. One sets
$H_7:=S(g_1,g_3)=-x_1^dx_2x_3^{d-1}x_4^{d-1}+x_1^{d-1}x_2^2x_4^{2(d-1)}$
which gives rise to $g^{\prime}_7=x_1^dx_2x_3^{d-1}x_4^{d-1}$. For $i=1,...,d-2$, set recursively
\[
H_{7+i}=S(g_1,H_{6+i})=-x_1^{d-i}x_2^{i+1}x_3^{d-1}x_4^{(i+1)(d-1)}+x_1^{(d-i-1)}x_2^{(i+2)}x_4^{(i+2)(d-1)}
\]
which yields the generator
$g^{\prime}_{7+i}=x_1^{d-i}x_2^{i+1}x_3^{d-1}x_4^{(i+1)(d-1)}$ for
$i=0,...,d-2$. Note that for $i=d-1$,
$H_{7+(d-1)}=H_{6+d}\xrightarrow{G}0$, i.e., $H_{6+d}$ reduces to zero with respect to the set $G$ and so this sequence stops.
We compute further that $g^{\prime
\prime}_7:=S(g_2,g_4)=x_1^{d-1}x_2^{d+1}x_4^{d-1}$ and for
$i=2,...,d-1$, we set: $g^{\prime \prime}_{6+i}:=S(g_2,g^{\prime
\prime}_{5+i})=x_1^{d-i}x_2^{d+i}x_4^{i(d-1)}$.
Note that for $i=d$, $g^{\prime \prime}_{6+d}=S(g_2,g^{\prime
\prime}_{5+d})\xrightarrow{G} 0$. Finally note that: $g^{\prime \prime
\prime}_7=S(g_3,g_6)=x_1^{2d-1}x_2x_4^{d-1}$ and for $i=2,...,d-1$, one sets
$g^{\prime \prime \prime}_{6+i}=S(g_3,g^{\prime \prime
\prime}_{5+i})=x_1^{2d-i}x_2^ix_4^{i(d-1)}$. We remark that for $i=d$, $g^{\prime \prime \prime}_{6+d}=S(g_3,g_{5+d})\xrightarrow{G} 0$. By adding these new generators to $G$ one sees that all other
$S$-polynomials can be reduced to zero with respect to the new set
and hence we have found a Gr\"obner Basis for $J$ and  all of the
generators of $in(J)$ are as above. We order the generators with respect to the reverse lexicogeraphical order as follows:
\[
x_{1}^{2d}<...<x_1^{2}x_2^{d-1}x_3^{d-1}x_4^{(d-1)^2}
\]
Note that $in(J)$ is a weakly stable ideal and therefore its
regularity is equal to the maximum of the numbers
$deg(u_i)+C(u_i)$. This maximum is obtained at the last generator
and $C(u_i)$ is given by $x_3^{d-2}$ and
therefore $\reg(J)=(d-1)^2+2d+(d-2)=d^2+d-1$.
\end{proof}

\begin{thm} In $S=k[x_1,x_2,x_3,x_4,x_5]$, let $I$ be the ideal:\\

$I=(x_{1}^{d},x_{2}^{d},x_{3}^{d},x_{1}x_{2}^{d-1},x_{1}x_{3}^{d-1},x_{2}x_{4}^{d-1}-x_{3}x_{5}^{d-1})$\\

Then $\reg(I)=d^2-1$ for $d\geq 2$.\\

\end{thm}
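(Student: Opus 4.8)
The plan is to follow the same strategy used in the proof of Theorem~2.5: compute the initial ideal of $I$ with respect to the reverse lexicographic order, verify that it is weakly stable, and then apply Theorems~2.3 and~2.4 to reduce the computation of $\reg(I)$ to a combinatorial calculation of the numbers $\deg(u_i)+C(u_i)$. First I would set up Buchberger's algorithm on the six generators $g_1=x_1^d$, $g_2=x_2^d$, $g_3=x_3^d$, $g_4=x_1x_2^{d-1}$, $g_5=x_1x_3^{d-1}$, $g_6=x_2x_4^{d-1}-x_3x_5^{d-1}$. The only non-monomial generator is $g_6$, so the interesting S-polynomials are those involving $g_6$ against the others; against the purely monomial generators one expects (as in Theorem~2.5) a cascade of new monomial generators of the form $x_3^{d-i}x_5^{i(d-1)}\cdot(\text{stuff})$ and $x_2^{d-i}x_4^{i(d-1)}\cdot(\text{stuff})$ arising from iterated reduction, terminating when the $x_3^d$ or $x_2^d$ (equivalently $g_3$, $g_2$) or the products $x_1x_2^{d-1}$, $x_1x_3^{d-1}$ allow a reduction to zero. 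I would record the resulting generating set of $\ini(I)$ explicitly, keeping track of which generator is last in the reverse lexicographic order.

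Once $\ini(I)$ is in hand, the next step is to check weak stability directly from Definition~2.2: for each monomial generator $u$, for each $j<m(u)$ I must exhibit a power $x_j^{a_j}$ with $x_j^{a_j}\bigl(u/x^\infty_{m(u)}\bigr)\in\ini(I)$. This is the routine but slightly tedious bookkeeping step; it should go through because the generators come in the structured families produced by the S-polynomial cascade, exactly as in the cases treated earlier in the section. Then Theorem~2.4 gives $\reg(I)=\reg(\ini(I))$, and Theorem~2.3 gives $\reg(\ini(I))=\max_i\{\deg(u_i)+C(u_i)\}$.

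Finally I would identify where the maximum $\deg(u_i)+C(u_i)$ is attained. By analogy with Caviglia's original example $I=(x_1^d,x_2^d,x_1x_3^{d-1}-x_2x_4^{d-1})$, for which $\reg=d^2-1$, I expect the maximum to occur at the last generator in the reverse lexicographic order (the one built from the highest power of $x_4$ or $x_5$, of degree roughly $(d-1)^2+\,\text{low order}$), with the obstructing monomial $\nu\notin((u_1,\dots,u_{i-1}):u_i)$ being a pure power of one of the variables $x_3$ or $x_4$. Computing $\deg(u_i)$ and $C(u_i)$ for that generator should yield $(d-1)^2 + (\text{correction})=d^2-1$. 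The main obstacle is the first step: organizing the Gröbner basis computation so that one is confident the listed generating set of $\ini(I)$ is complete — i.e., that every remaining S-polynomial reduces to zero — since with six generators (two of which, $g_4$ and $g_5$, interact with $g_6$ separately, producing two parallel cascades in $x_4$ and $x_5$) there are more S-pairs to control than in Theorem~2.5, and one must be careful not to miss a branch of new generators.
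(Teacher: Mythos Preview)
Your plan is correct and matches the paper's proof: compute $\ini(I)$, verify weak stability, and read off $\reg(I)$ via Theorems~2.2 and~2.3, with the maximum attained at the last generator $x_2(x_3x_5^{d-1})^{d-1}$ and $C$-witness $x_4^{d-2}$. One small correction to your expectations: the leading terms of $g_5=x_1x_3^{d-1}$ and $g_6$ are coprime, so only $g_2$ and $g_4$ against $g_6$ produce nontrivial cascades, and both cascades live in $x_5$ (yielding the families $x_2^{d-i}(x_3x_5^{d-1})^i$ and $x_1x_2^{d-1-i}(x_3x_5^{d-1})^i$) rather than one in $x_4$ and one in $x_5$.
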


\begin{proof} By the same method as above, one can show that\\

$\ini(I)=\{x_{1}^{d},x_{2}^{d},x_{3}^{d},x_{1}x_{2}^{d-1},x_{1}x_{3}^{d-1},x_{2}x_{4}^{d-1}\}
\cup \{x_1x_2^{d-(i+1)}(x_3x_5^{d-1})^i|i=1,...,d-2\}\cup
\{x_2^{d-i}(x_3x_5^{d-1})^i|i=1,...,d-1\}$.\\

This ideal is weakly stable. We order the generators as follows\\

$x_{1}^{d}<...<x_2(x_3x_5^{d-1})^{d-1}$\\

The maximum of the degrees is $1+d(d-1)$ and $C(u_i)$ can be given
by $x_4^{d-2}$. It follows that: $\reg(I)=1+d(d-1)+d-2=d^2-1$.
\end{proof}

In what follows, we give an example of an ideal in the polynomial
ring $S=k[x_1,x_2,x_3,x_4,x_5,x_6]$ with six generators whose
regularity is a polynomial of degree $3$ in the generating degree
$d$ of the ideal. \\

\begin{thm} In $S=k[x_1,x_2,x_3,x_4,x_5,x_6]$ consider the ideal
\[
I=(x_{1}^{d},x_{2}^{d},x_{3}^{d},x_{4}^{d},x_{1}x_{3}^{d-1}-x_{2}x_{4}^{d-1},x_{3}x_{5}^{d-1}-x_{4}x_{6}^{d-1}),
\]
then $\reg(I)=d(d-1)(d-2)+3d-3=d^3-3d^2+5d-3$.
\end{thm}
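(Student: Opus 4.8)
The plan is to proceed exactly as in the proofs of Theorems 2.4 and 2.5: run Buchberger's algorithm to obtain a Gr\"obner basis of $I$ for the rev-lex order, read off the minimal generators of $\ini(I)$, verify that $\ini(I)$ is weakly stable, and then combine Theorem 2.3 (which gives $\reg(I)=\reg(\ini(I))$) with Theorem 2.2 (which computes $\reg(\ini(I))=\max_i(\deg u_i+C(u_i))$ once the generators $u_i$ are ordered as in that theorem). Write $f_1=x_1x_3^{d-1}-x_2x_4^{d-1}$ and $f_2=x_3x_5^{d-1}-x_4x_6^{d-1}$; one checks $\ini(f_1)=x_1x_3^{d-1}$ and $\ini(f_2)=x_3x_5^{d-1}$, so that $\ini(I)$ starts with $x_1^d,x_2^d,x_3^d,x_4^d,x_1x_3^{d-1},x_3x_5^{d-1}$.

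The Buchberger computation is the core of the argument. As in Caviglia's example, $S(f_1,x_1^d)$ triggers the chain of monomials $x_1^{d-i}x_2^ix_4^{i(d-1)}$ ($i=1,\dots,d-1$), and symmetrically $S(f_2,x_3^d)$ triggers $x_3^{d-i}x_4^ix_6^{i(d-1)}$ ($i=1,\dots,d-1$); each chain stops once the exponent of $x_2$, resp.\ $x_4$, would reach $d$. The new ingredient is the binomial $h:=S(f_1,f_2)$, with $\ini(h)=x_2x_4^{d-1}x_5^{d-1}$ and tail $x_1x_3^{d-2}x_4x_6^{d-1}$. Taking $S(h,x_4^d)$ and $S(h,x_2^d)$ produces new monomial generators; on each of these a string of S-polynomials with $f_2$ runs an \emph{inner} chain of length about $d$ (each step multiplies the current monomial by $x_4x_6^{d-1}/x_3$, i.e.\ pumps up the $x_4$- and $x_6$-exponents at the cost of one $x_3$), after which a further S-polynomial with $h$ raises the $x_1$-exponent by one and opens the next \emph{outer} round. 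Because $x_1^d\in I$, only about $d-2$ outer rounds occur, and the process bottoms out at a single monomial $u^\ast$ whose $x_6$-exponent is of order $(d-1)^3$ and whose total degree is $d^3-3d^2+4d-1$; this $u^\ast$ is the minimal generator of $\ini(I)$ of largest degree. In parallel one must check that every other S-polynomial reduces to zero modulo the basis built so far, exactly as was done explicitly for $\ini(J)$ in Theorem 2.4.

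Granting the description of $\ini(I)$, weak stability is checked one generator at a time: for a chain-monomial $u$, dividing out the top power of $x_{m(u)}$ and then multiplying by a high enough power of $x_j$ (for each $j<m(u)$) yields a monomial divisible by one of $x_1^d,x_2^d,x_3^d,x_4^d,x_1x_3^{d-1},x_3x_5^{d-1}$, hence lying in $\ini(I)$. Then Theorem 2.3 gives $\reg(I)=\reg(\ini(I))$, and by Theorem 2.2 the latter equals $\max_i(\deg u_i+C(u_i))$; this maximum is attained at $u^\ast$, for which the highest-degree monomial $\nu\in k[x_1,\dots,x_5]$ with $\nu u^\ast$ not already contained in the preceding generators is $\nu=x_5^{d-2}$ (one more factor of $x_5$ makes $\nu u^\ast$ divisible by $x_3x_5^{d-1}$), so $C(u^\ast)=d-2$. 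Hence
\begin{align*}
\reg(I)&=\reg(\ini(I))=\deg(u^\ast)+C(u^\ast)\\
&=(d^3-3d^2+4d-1)+(d-2)=d^3-3d^2+5d-3=d(d-1)(d-2)+3d-3.
\end{align*}
The hard part will be the Buchberger computation: with two interacting binomials the S-polynomial reductions no longer form a single chain as in Caviglia's example but a doubly nested family of chains, and the real work is to keep track of these chains, pin down $u^\ast$ and its degree exactly, and confirm that no further generators of $\ini(I)$ appear.
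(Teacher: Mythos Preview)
Your strategy is exactly the paper's: compute $\ini(I)$ via Buchberger, check weak stability, apply Theorems~2.2 and~2.3, and locate the extremal generator (the paper pins it down as $u^\ast=x_1^{d-1}x_2x_3x_4^{d-2}x_6^{d(d-1)(d-2)}$, of degree $d^3-3d^2+4d-1$, with $C(u^\ast)=d-2$ witnessed by $x_5^{d-2}$, matching your numbers). One small slip to watch when you carry out the Buchberger step: since $x_4^d\in I$, the chain $x_1^{d-i}x_2^ix_4^{i(d-1)}$ you describe actually reduces for $i\ge 2$, so only $x_1^{d-1}x_2x_4^{d-1}$ survives there---the genuine nested chains are the ones indexed by $(i,j)$ that you sketch afterwards, and those are the ones the paper records.
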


\begin{proof}
One could show that $\ini(I)$ is generated by the following set:
\begin{align*}
\begin{split}
&\{x_{1}^{d},x_{2}^{d},x_{3}^{d},x_{4}^{d},x_{1}x_{3}^{d-1},x_{3}x_{5}^{d-1},
x_1^{d-1}x_2x_4^{d-1},x_2x_3x_4^{d-1},x_{2}x_{4}^{d-1}x_{5}^{d-1}\}\cup\\ 
&\{x_1^{j}x_2^{d-j}x_3^{d-2-i}x_4^{i+1}x_6^{((j-1)d-(j-2)+i)(d-1)}| i=0,..,d-2,j=1,...,d-2\}\cup\\
&\{x_1^{d-1}x_2x_3^{d-2-i}x_4^{i+1}x_6^{((d-2)d-(d-3)+i)(d-1)}| i=0,...,d-3\}\cup\\
&\{x_1x_3^{d-i}x_4^{i}x_6^{(i-1)(d-1)}|i=2,...,d-1\}\cup\\
&\{x_3^{d-i}x_4^{i}x_6^{i(d-1)}| i=1,...,d-1\}
\end{split}
\end{align*}

This ideal is weakly stable. Moreover, one sees that the maximum
of the numbers in Theorem 2.2 is obtained at the generator
$x_1^{d-1}x_2x_3x_4^{d-2}x_6^{d(d-2)(d-1)}$ and an example of
$C(u_j)$ could be: $x_5^{d-2}$. This means that:
$\reg(I)=d(d-2)(d-1)+d-2+2+d-1+d-2=d(d-1)(d-2)+3d-3=d^3-3d^2+5d-3$.
\end{proof}

\begin{rem} Presumably by a similar argument as above one
can show that in the ring $S=k[x_1,...,x_{2n}]$ the ideal given by
$I=(x_1^d,...,x_{2n-2}^d)+(x_{2i+1}x_{2i+3}^{d-1}-x_{2i+2}x_{2i+4}^{d-1}\; |\; 0\leq i\leq n-2)$ is a polynomial of degree $n$. We expect that
$in(I)$is weakly stable. Note that for this ideal, $in(I)$ is very
plausible to be weakly stable as the ideal already contains pure
powers of $x_1,...,x_{2n-2}$. That is, pure powers of all of
the variables except two. For example for $n=4$, then
\begin{align*}
\begin{split}
I=(&x_{1}^{d}, x_{2}^{d}, x_{3}^{d}, x_{4}^{d}, x_{5}^{d}, x_{6}^{d},\\
     &x_{1}x_{3}^{d-1}-x_{2}x_{4}^{d-1}, x_{3}x_{5}^{d-1}-x_{4}x_{6}^{d-1},x_{5}x_{7}^{d-1}-x_{6}x_{8}^{d-1})\\
\end{split}
\end{align*}
and one could show that
$in(I)$ is weakly stable and \[\reg(I)=(d-2)(d(d-1)^2+3)+3\]. In order to do some computational experiments, let us consider this example in the following simple code in CoCoA to compute the \textbf{BettiDiagram} and hence the regulrity of $I$ when $d=3,\cdots,10$ for example.

\begin{flushleft}
{\small\tt\boldmath
Use R ::= Q[x[1..8]];\\
For D:=3 To 10 Do\\
$\qquad$ I := Ideal($x[1]^D, x[2]^D, x[3]^D, x[4]^D, x[5]^D, x[6]^D,$\\
$\qquad\qquad\qquad\qquad x[1]x[3]^{(D-1)}-x[2]x[4]^{(D-1)},$\\
$\qquad\qquad\qquad\qquad x[3]x[5]^{(D-1)}-x[4]x[6]^{(D-1)},$\\
$\qquad\qquad\qquad\qquad x[5]x[7]^{(D-1)}-x[6]x[8]^{(D-1)}$);\\
$\qquad$BettiDiagram(I);\\
EndFor;\\
}
\end{flushleft}
and here are the regularities:
\begin{align*}
\begin{split}
&\text{If }d=3 \text{ then } reg(I)=18\\
&\text{If }d=4 \text{ then } reg(I)=81\\
&\text{If }d=5 \text{ then } reg(I)=252\\
&\text{If }d=6 \text{ then } reg(I)=615\\
&\text{If }d=7 \text{ then } reg(I)=1278\\
&\text{If }d=8 \text{ then } reg(I)=2373\\
&\text{If }d=9 \text{ then } reg(I)=4056\\
&\text{If }d=10 \text{ then } reg(I)=6507\\
\end{split}
\end{align*}
and all of them satisfy the presented formula.
\end{rem}

\begin{rem} It can be seen that the large regularity of
the ideal in Theorem 2.5 is revealed in the first syzygy of the
ideal. In fact if one defines $t_i(I)=\max
\{j|\beta_{i,j}(S/I)\neq 0\}$, where the $\beta_{i,j}$ are the
Betti numbers of $S/I$. Then it follows that $t_1(S/I)=d$ and
$t_2(S/I)=reg(I)$; see \cite{M}.
\end{rem}

\section{Ideals with regularity jumps}

In this section we give several examples of the jump phenomenon introduced
in \cite{Co}. The notion of regularity jump
has been defined in \cite{Co} as follows:\\

\begin{defn} An equigenerated ideal $I$ in degree $d$
is said to have regularity jump at $k$ (or that the regularity of
powers of $I$ jumps at place $k$)
if $reg(I^k)-reg(I^{k-1})>d$.
\end{defn}

The first example of such an ideal was given by Terai.
\begin{exam} (Terai)
This ideal is
\[I=(x_1x_2x_3,x_1x_2x_4,x_1x_3x_5,x_1x_4x_6,x_1x_5x_6,x_2x_3x_6,x_2x_4x_5,x_2x_5x_6,x_3x_4x_5,x_3x_4x_6)\]
for which $reg(I)=3$ and $reg(I^2)=7$. The non-linear syzygy of
$I^2$ appears at the end of the resolution.
\end{exam}

The example that we consider is a generalization of example 2.10
in \cite{Co}. In the aforementioned paper \cite{Co} it is guessed that this
family of ideals has regularity jumps at $k=2$ by declaring that
there are some experimental evidences that $I^2_n$ has non-linear
syzygy. Here we take a different approach and prove that
specific graded pieces of the graded local cohomology modules do
not vanish, leading to the fact that $reg(I^2_n)>4$. Note that ideals $I_n$ define Cohen-Macaulay rings of minimal multiplicity and our result shows that even among such ideals one can find infinitely many examples whose squares do not have linear resolution. The example is as follows.

\begin{exam} Let
\begin{align*}
\begin{split}
I_n=(&x_1^2,...,x_{n+1}^2,x_1x_2,...,x_1x_{n+1},\\
&x_2x_3-x_1x_{n+2},x_2x_4-x_1x_{n+3},...,x_2x_{n+1}-x_1x_{2n},...,\\
&x_3x_4-x_1x_{2n+1},...,x_3x_{n+1}-x_1x_{3n-2},...,x_nx_{n+1}-x_1x_{s}),
\end{split}
\end{align*}
where $s=\frac{n(n+1)}{2}+1$.
\end{exam}

In this description, a typical generator apart from
$x_1^2,...,x_{n+1}^2,x_1x_2,...,x_1x_{n+1}$ is of the form
$x_ix_j-x_1x_{t(n,i,j)}$, where $2\leq i<j\leq n+1$ and
$t(n,i,j)=(i-1)n-\frac{(i-1)(i-2)}{2}+1+(j-i)$

Another description of this ideal as given in \cite{Co} is as follows:
\[
I_n=(x^2,y_1^2,y_2^2,...,y_n^2,xy_1,...,xy_n,y_iy_j-xz_{i,j})\text{ for } 1\leq i<j\leq n.
\]

Then it holds that:
\begin{thm}
For $I_n$ as above, $reg(I_n)=2$ and $reg(I^2_n)>4$. Therefore we
get an infinite family of ideals with regularity jumps at $k=2$.
\end{thm}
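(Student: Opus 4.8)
The plan is to treat the two assertions by very different means: a Gröbner‑basis computation for $\reg(I_n)=2$, and a direct local‑cohomology argument for the jump.

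For $\reg(I_n)=2$, I would order the variables so that the ``square‑free block'' $x_1>x_2>\dots>x_{n+1}$ precedes $x_{n+2},\dots,x_s$ and take the degree reverse lexicographic order. The claim is that the listed generators are already a Gröbner basis: the generators $x_1^2,x_k^2,x_1x_k$ are monomials, each $x_ix_j-x_1x_{t(n,i,j)}$ has leading monomial $x_ix_j$, and the only S‑pairs one must inspect are those among the $x_ix_j-x_1x_{t(n,i,j)}$ and against the $x_1x_k$ and the $x_k^2$; every such S‑polynomial becomes, after one reduction step, divisible by some $x_1x_k$ or $x_k^2$, hence reduces to $0$. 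So $\ini(I_n)=(x_1,\dots,x_{n+1})^2$, which has a $2$‑linear resolution, giving $\reg(\ini(I_n))=2$; together with $\reg(I_n)\le\reg(\ini(I_n))$ and $\reg(I_n)\ge 2$ (as $I_n$ is generated in degree $2$) this yields $\reg(I_n)=2$. This also re‑proves that $S/I_n$ has minimal multiplicity, hence a linear resolution.

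For $\reg(I_n^2)>4$: since $I_n^2$ is generated in degree $4$ and $d=2$, by the definition above it is enough to show $\reg(S/I_n^2)\ge 4$. The plan is to use the description $\reg(M)=\max_i\{\e(\H^i_{\fm}(M))+i\}$ recalled in the introduction, together with the conormal sequence
\[
0\longrightarrow I_n/I_n^2\longrightarrow S/I_n^2\longrightarrow S/I_n\longrightarrow 0 .
\]
Since $\reg(S/I_n)=1$, the standard estimate $\reg(I_n/I_n^2)\le\max(\reg(S/I_n^2),\reg(S/I_n)+1)$ forces $\reg(S/I_n^2)\ge\reg(I_n/I_n^2)$ whenever $\reg(I_n/I_n^2)\ge 3$, so it suffices to produce a socle element of $I_n/I_n^2$ in degree $\ge 4$: a homogeneous $w$ with $\deg w\ge 4$, $w\in I_n$, $w\notin I_n^2$ and $\fm^{N}w\su I_n^2$ for some $N$. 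The candidate is $w=x_1x_2x_3x_4$ (for $n=3$ this is exactly the element behind the non‑linear second syzygy of $I_3^2$ in \cite{Co}). Here $w\in I_n$ because $x_1x_2$ is a generator, so $w=(x_1x_2)\,x_3x_4\in I_n$. For $w\notin I_n^2$ I would argue: $(I_n^2)_4=(I_n)_2\cdot(I_n)_2$ (as $I_n$ has no linear forms), and in any expansion $w=\sum c_{g,h}\,gh$ over quadratic generators, the monomial $x_1x_2x_3x_4$ occurs only in the three products $(x_1x_k)(x_ix_j-x_1x_{t(n,i,j)})$ with $\{k,i,j\}=\{2,3,4\}$, each of which also drags along a monomial $x_1^2x_kx_{t(n,i,j)}$ appearing in no other product; forcing those three coefficients to vanish leaves the coefficient of $x_1x_2x_3x_4$ equal to $0$, a contradiction. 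Finally, writing $\fn=(x_1,\dots,x_{n+1})$ and using $\fn^3\su I_n$ (each cubic block monomial is a quadratic generator times a monomial, after rewriting $x_ix_j=(x_ix_j-x_1x_{t(n,i,j)})+x_1x_{t(n,i,j)}$): for a block variable $x_\ell$, $x_\ell w$ is a degree‑$5$ monomial divisible by some $x_1x_k$ or $x_k^2$, hence lies in $I_n\cdot\fn^3\su I_n^2$; and for $x_\ell=x_{t(n,i,j)}$, using $x_1x_\ell=x_ix_j-(x_ix_j-x_1x_\ell)$ reduces $x_\ell w$ modulo $I_n^2$ to $x_2x_3x_4\cdot x_ix_j$, which is treated the same way. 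This exhibits the socle element and gives $\reg(S/I_n^2)\ge 4$, hence $\reg(I_n^2)\ge 5>4$, i.e. a jump at $k=2$.

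The hard part is this last step — that $w$ is annihilated by a power of $\fm$ modulo $I_n^2$. Conceptually it is precisely the statement that the homogeneous system of parameters $\{x_{t(n,i,j)}\}$ of $S/I_n^2$, which \emph{is} a regular sequence on $S/I_n$ by Cohen--Macaulayness, fails to stay regular on $S/I_n^2$, and already in degree $4$; that failure is the source of the jump. For small $n$ (certainly $n=3$, recovering \cite{Co}, and $n=4$) the verification above goes through verbatim, since every index pair $(i,j)$ then meets $\{2,3,4\}$. For larger $n$ one must control $I_n^2$ on the square‑free monomials of the block $x_1,\dots,x_{n+1}$ — exactly the kind of monomial/Gröbner bookkeeping carried out in Section~2 — or else replace $w$ by a slightly larger witness involving more of $x_2,\dots,x_{n+1}$, or argue directly that some graded piece $\H^i_{\fm}(S/I_n^2)_j$ with $i+j\ge 4$ is non‑zero. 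That bookkeeping is the main obstacle, and it is where the local cohomological methods advertised in the introduction do the real work.
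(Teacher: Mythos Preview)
Your computation of $\reg(I_n)=2$ is the same as the paper's.

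For $\reg(I_n^2)>4$ you have the right witness $\alpha=x_1x_2x_3x_4$ and the right relations, but --- as you yourself flag --- the argument stalls for $n\ge 5$: once there are generators $x_ix_j-x_1x_r$ with $\{i,j\}\cap\{2,3,4\}=\emptyset$, you cannot show that the corresponding $x_r$ kill $\alpha$ modulo $I_n^2$, so $\alpha$ need not lie in $H^0_{\fm}$ at all. The paper's move is not more bookkeeping but a change of target. List the bad indices $r_1,\dots,r_l$ (those with $\{i,j\}\cap\{2,3,4\}=\emptyset$) and, instead of aiming for $H^0$, place
\[
\kappa \;=\; \Bigl(0,\dots,0,\ \frac{\alpha}{x_{r_1}\cdots x_{r_l}},\ 0,\dots,0\Bigr)
\]
in the $l$-th \v{C}ech module of $S/I_n^2$ with respect to $x_{n+2},\dots,x_s$ (the block variables $x_1,\dots,x_{n+1}$ are nilpotent modulo $I_n^2$, so they drop out of the \v{C}ech complex). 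The identities you have already verified --- that $x_r\alpha=0$ in $S/I_n^2$ whenever the corresponding pair $\{i,j\}$ meets $\{2,3,4\}$, together with $x_\ell\alpha\in I_n^2$ for $\ell\le n+1$ --- are exactly what is needed for $\kappa\in\ker(d^l)_{4-l}$, yielding $H^l_{\fm}(S/I_n^2)_{4-l}\ne 0$ and hence $\reg(I_n^2)\ge 5$. In other words, the ``local cohomological methods'' you defer to at the end are not an additional layer of work on top of your computations; they are the device that converts your \emph{partial} annihilation into a complete proof, by inverting precisely the variables you cannot kill. The detour through the conormal sequence and $I_n/I_n^2$ is then unnecessary.
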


\begin{proof} Although the first description seems more complicated than the
second one, we prefer, in order to avoid complicated indices, to
work with the first description. It is straightforward to check that\\
$in(I_n)=(x_1^2,...,x_{n+1}^2,x_1x_2,...,x_1x_{n+1},x_2x_3,x_2x_4,...,x_nx_{n+1})$.
In fact the $S$-polynomials are all reduced. It follows that
$in(I_n)$ is weakly stable and therefore
regularity of $I_n$ and $in(I_n)$ are both equal to the maximum
degree of generators of $in(I_n)$, i.e., equal to $2$. Note however that
$in(I_n^2)$ is \emph{not} weakly stable.\\

In order to show that $reg(I^2_n)>4$, setting $J_n=I^2_n$, we show
that there always exists an integer $l$ such that
$H^{l}_{m}(S/J_n)_{4-l}\neq 0$. By Remark 1.1 it follows that
$reg(J_n)>4$. To this end, we use the fact that local cohomology
can be computed via \v{C}ech complex. That is, the following
complex:\\

$0\to S/J_n \xrightarrow{d^0} \oplus
(S/J_n)_{x_i}\xrightarrow{d^1} \oplus (S/J_n)_{x_ix_j}\to ....
\xrightarrow{d^{n-1}} (S/J_n)_{x_1...x_s}\to 0$\\

Where the maps are alternating sums of localization maps. See
\cite{BS}, \S 5.1. Note that since $x_1^4=...=x_{n+1}^4=0$ in $S/J_n$,
the only localized summands that contribute to the above complex
are localizations at $x_j$ with $j=n+2,...,s$. In other words, the
cohomology can be computed
by the complex\\

$0\to S/J_n \xrightarrow{d^0} \oplus_{n+2\leq i\leq s}
(S/J_n)_{x_i}\xrightarrow{d^1} \oplus_{n+2\leq i<j \leq s}
(S/J_n)_{x_ix_j}\to ....
\xrightarrow{d^{s-n-1}} (S/J_n)_{x_{n+2}...x_{s}}\to 0$\\

We first describe our method for the simplest case of $n=3$ and
then write down the natural generalization.\\

Let $n=3$. Then in $S/J_3$, we have the following equalities:
(note that we abuse the notation and show the image of an element
$\beta \in S$ in $S/J$ again by $\beta$)
\begin{align*}
\begin{split}
&x_1x_4(x_2x_3-x_1x_5)=0\Rightarrow x_1x_2x_3x_4=x_1^2x_4x_5\\
&x_1x_2(x_3x_4-x_1x_7)=0\Rightarrow x_1x_2x_3x_4=x_1^2x_2x_7\\
&x_1x_3(x_2x_4-x_1x_6)=0\Rightarrow x_1x_2x_3x_4=x_1^2x_3x_6\\
\end{split}
\end{align*}

and also:\\

$x_1x_2(x_2x_3-x_1x_5)=0\Rightarrow x_1^2x_2x_5=0$, because
$x_1x_2^2x_3=(x_1x_3)(x_2^2)=0$.\\

Similarly,
$x_1^2x_3x_5=x_1^2x_2x_6=x_1^2x_4x_6=x_1^2x_3x_7=x_1^2x_4x_7=0$\\

It follows, combining the above sets of equalities, that the
element $\alpha:=x_1x_2x_3x_4$ is annihilated by $x_5$, $x_6$ and
$x_7$. This shows that $\alpha\in ker(d^0)_4=H^{0}_{m}(S/J_3)_{4}$
and hence $H^{0}_{m}(S/J_3)_{4}\neq 0$. \\

The above ideas can be generalized to arbitrary $n$. In fact, in
the general case, we have the following in $S/J_n$:
\[
x_1x_2x_3x_4=x_1^2x_4x_{n+2}=x_1^2x_3x_{n+3}=x_1^2x_2x_{2n+1}
\]

If $x_ix_j-x_1x_r$ is a generator of $I_n$ such that $\{i,j\}\cap
\{2,3,4\}\neq \emptyset$, then by the same argument as in the
$n=3$
case, it follows that
\[
x_1^2x_tx_r=0\text{ for all }t\in \{i,j\}\cap \{2,3,4\}.
\]

Now let $x_{i_1}x_{j_1}-x_1x_{r_1}$,...,
$x_{i_l}x_{j_l}-x_1x_{r_l}$ be the set of all generators of $I_n$
of the form $x_ix_j-x_1x_r$ such that $\{i,j\}\cap \{2,3,4\}=
\emptyset$. Set $\alpha:=x_1x_2x_3x_4$ as before. Then the above
equalities show that for
\[
j\in \{n+2,...,s\}\setminus \{r_1,...,r_l\}, \alpha x_j=0.
\]
This implies that the element
$\kappa:=(0,...,\frac{\alpha}{x_{r_1}...x_{r_l}},...,0)\in
C^{l}(S/J_n)$ lies in $ker(d^l)_{4-l}$. It follows that
$\kappa\in H^{l}_{m}(S/J_n)_{4-l}$
which is non-zero in this
cohomology module and hence $H^{l}_{m}(S/J_n)_{4-l}\neq 0$.
\end{proof}

\section*{Acknowledgment} The authors gratefully acknowledge the computer algebra system CoCoA in which the presented examples and the performed computations have been tested. They actually designed the first motivations for this paper.

\end{document}